\documentclass[12pt]{article}
\setlength{\parskip}{6pt}
\usepackage[margin=2.5cm]{geometry}
\usepackage[pdftex]{graphicx}
\usepackage{currvita}
\usepackage{hyperref}
\usepackage{amssymb}
\usepackage{amsthm}
\usepackage{amsfonts}
\usepackage{amsmath}
\usepackage{parskip}
\usepackage{array}
\usepackage{authblk}
\usepackage{enumerate}
\usepackage{enumitem}
\usepackage{titlesec}

\setlength{\parindent}{0pt}
\titlespacing\section{0pt}{12pt plus 4pt minus 2pt}{0pt plus 2pt minus 2pt}

\newtheoremstyle{exampstyle}
  {10pt} 
  {\topsep} 
  {} 
  {} 
  {\bfseries} 
  {.} 
  {.5em} 
  {} 

\theoremstyle{exampstyle} \newtheorem{thm}{Theorem}[section]
\newtheorem{lem}[thm]{Lemma}
\newtheorem{prop}[thm]{Proposition}
\newtheorem{cor}[thm]{Corollary}


\begin{document}
\title{An improved bound for disjoint directed cycles} 
\author{Matija Buci\'c}
\affil{University of Cambridge}
\date{}
\maketitle

\begin{abstract}
We show that every directed graph with minimum out-degree at least $18k$ contains at least $k$ vertex disjoint cycles. This is an improvement over the result of Alon who showed this result for digraphs of minimum out-degree at least $64k$. The main benefit of the argument is that getting better results for small values of $k$ allows for further improvements to the constant. 
\end{abstract}		
				
\section {Introduction}
In this paper all digraphs are considered to contain no parallel edges, but loops and bidirectional edges (pairs of edges which join two vertices in opposite directions) are allowed, but considered as cycles of length $1$ and $2$ respectively. Throughout this paper by a cycle we mean a directed cycle.

We define $f(k)$ to be the minimal integer $d$ for which any digraph $G,$ with all vertices having at least $d$ outgoing edges, contains $k$ pairwise vertex disjoint cycles as subgraphs, if such a $d$ does not exist we set $f(k)=\infty.$

The study of $f(k)$ was initiated by Bermond and Thomassen \cite{Ber_Tho81} who observed that as the complete directed graph\footnote{A complete digraph has no loops, and between any two vertices, contains a bidirectional edge.} on $2k-1$ vertices has out-degree $2k-2$ and contains at most $k-1$ disjoint cycles, it follows that $f(k) \ge 2k-1.$ They also conjecture that this is in fact tight, so that $f(k)=2k-1$. 

There has been plenty of work around this conjecture. Thomassen \cite{Tho83} showed that $f(2)=3$ and that $f(k)$ is always finite. In particular, he showed $f(k) \le (k+1)!$. Almost 15 years later Alon \cite{Alon96} improved significantly on this bound and showed that $f(k) \le 64k$. More recently P\'or and Sereni \cite{k=3} showed that $f(3)=5.$ The conjecture has received attention even in the case when the digraph in question is restricted to be a tournament, with Bessy,  Lichiardopol and Sereni \cite{tournaments1} showing that it holds for tournaments with bounded minimum in-degree and later completely resolved, in case of tournaments, by Bang-Jensen, Bessy and Thomass\'e \cite{tournaments2}

We improve on this bound to show $f(k) \le 18k$. Our proof follows closely that of Alon \cite{Alon96}, our main improvement is based on exploiting the fact that $f(3)=5$, due to P\'or and Sereni \cite{k=3}, which allows for significantly better bounds in a part of the argument.

\subsection{Definitions and notation}
For a vertex $v$ of an undirected graph $G$ we denote by $d_G(v)$ its degree and by $N_G(v)$ the set of its neighbours.

Given a digraph $G(V,E),$ for $x,y \in V$ we denote by $xy$ the edge from $x$ to $y$. If edge $xy$ exists we say $x$ is a \emph{parent} of $y$ and $y$ is a \emph{child} of $x$. If both edges $xy$ and $yx$ exist we say there is a \emph{bidirectional} edge joining $x$ and $y$. 

We define the \emph{out-degree} of a vertex $v$ of a digraph $G$ as the number of children of $v$ in $G$ and denote it as $d^+_G(v)$. We denote by $N_G^{+}(v)$ the set of children of $v$ in $G$. We define the \textit{in-degree} of a vertex $v$ as the number of parents of $v$ in $G$ and denote it by $d^-_G(v)$ and we denote the set of these parents as $N_G^{-}(v).$ The index digraph $G$ is omitted whenever there is no chance of confusion in regards to which graph is being referred to. 

We say a digraph $G$ is \textit{$k$-out} if every vertex in $G$ has out-degree at least $k$. A digraph is said to be \textit{exactly $k$-out} if each out-degree in the graph equals $k.$ \textit{$k$-in} and \textit{exactly $k$-in} graphs are defined analogously. 

We denote by $V(G)$ the set of vertices of a (directed) graph $G$, for any $X \subseteq V(G)$ we denote by $G[X]$ the subgraph of $G$ induced by $X$.

\section{Preliminary results}

We start by proving $f(2)=3$, first proved by Thomassen \cite{Tho83}. We present a similar proof of this result in order to illustrate some of the ideas, which we will reuse later in the proof of our main result.

\begin{lem} \label{lem:one_one}
$f(2)=3$
\end{lem}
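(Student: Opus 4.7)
The lower bound $f(2) \geq 3$ is witnessed by the complete digraph on three vertices: it is $2$-out and any two of its cycles share at least one vertex.

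For the upper bound, I will show by induction on $|V(G)|$ that every $3$-out digraph $G$ contains two vertex-disjoint cycles. Since a vertex of in-degree $0$ can be deleted without destroying the $3$-out property, I may assume every vertex of $G$ has in-degree at least $1$. Let $C = v_0 v_1 \cdots v_{\ell-1}$ be a shortest directed cycle in $G$. If $\ell \leq 2$, removing $V(C)$ decreases each remaining out-degree by at most $2$, so $G - V(C)$ is still $1$-out and hence contains a cycle disjoint from $C$.

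Suppose $\ell \geq 3$. Because $C$ is shortest it is chordless: any edge $v_iv_j$ with $j \not\equiv i+1 \pmod{\ell}$ combines with the forward $C$-arc from $v_j$ to $v_i$ into a strictly shorter cycle. Consequently every $v_i$ sends two out-edges into $A := V(G) \setminus V(C)$, giving $2\ell$ edges from $V(C)$ to $A$. If $G[A]$ contains a cycle we are done, so assume $G[A]$ is a DAG and pick a sink $w$ of this DAG; then the three out-edges of $w$ go to distinct vertices $v_{i_1}, v_{i_2}, v_{i_3}$ on $C$.

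The central structural observation, which I would use repeatedly and which exploits the minimality of $C$, is: for any $a \in A$, any in-neighbor $v_j$ of $a$ on $C$, and any out-neighbor $v_k$ of $a$ on $C$, the cycle $v_j \to a \to v_k \to v_{k+1} \to \cdots \to v_j$ has length $2 + ((j - k) \bmod \ell)$, which by minimality of $C$ is at least $\ell$, forcing $j \equiv k-1$ or $k-2 \pmod{\ell}$. Applied to $w$, whose three out-neighbors are distinct, the three pairs $\{i_r - 2, i_r - 1\}$ (mod $\ell$) cannot share a common element, so $w$ has no in-neighbor on $C$.

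To finish, I would double count the $2\ell$ edges from $V(C)$ to $A$: setting $s(a) := |N^+(a) \cap V(C)|$, the observation yields $|N^-(a) \cap V(C)| = 0$ if $s(a) = 3$, $\leq 1$ if $s(a) = 2$, and $\leq 2$ if $s(a) = 1$, while the DAG constraint gives $\sum_{a \in A} (3 - s(a)) \leq \binom{|A|}{2}$. Combining these rules out small values of $|A|$. The main obstacle is handling vertices with $s(a) = 0$ (all three out-edges inside $A$), on which the observation imposes no direct restriction; to address it I would extend the observation by tracing forward from such an $a$ along $G[A]$ to a sink $w'$ and arguing with the longer path $v_j \to a \to \cdots \to w' \to v_{i'_k}$, whose length combined with the minimality of $C$ still constrains the in-neighbors of $a$ on $C$ and yields the final contradiction.
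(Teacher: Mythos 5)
The parts you prove are fine: the lower bound, the $\ell\le 2$ case, the chordlessness of a shortest cycle $C$, the ``central observation'' that an in-neighbour $v_j$ and an out-neighbour $v_k$ of $a\in A$ on $C$ must satisfy $j\equiv k-1$ or $k-2 \pmod\ell$, and the conclusion that a sink of the DAG $G[A]$ has no in-neighbour on $C$. But the proof does not close, and the gap is not a formality.

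First, the concluding double count is never actually performed, and when one performs it it does not yield a contradiction. Take $\ell=3$: your bounds give $|N^-(a)\cap V(C)|\le 3-s(a)$ for every $a$, so the $2\ell=6$ edges from $C$ into $A$ force $\sum_a(3-s(a))\ge 6$, while the DAG bound gives $\sum_a(3-s(a))\le\binom{|A|}{2}$. Together these only say $|A|\ge 4$ — they rule out nothing for larger $A$, and there is no a priori upper bound on $|A|$ to play against. Second, your proposed fix for the $s(a)=0$ vertices points in the wrong direction: if you trace a path of length $p$ through $G[A]$ from $a$ to a sink $w'$ and close it up through $C$, minimality of $C$ only gives $(j-i_k')\bmod\ell\ \ge\ \ell-2-p$, a constraint that \emph{weakens} as $p$ grows and disappears entirely once $p\ge\ell-2$. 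So far from ``still constraining'' the in-neighbours of $a$, long paths in $A$ escape the minimality of $C$ altogether. Finally, note that your induction hypothesis is never invoked (deleting an in-degree-$0$ vertex needs no induction, since cycles in a subgraph are cycles in $G$), so you have no reduction mechanism to fall back on. The paper's proof supplies exactly such a mechanism: it contracts an edge $uv$ whose endpoints have no common parent to reduce $n$, which forces every edge of a minimal counterexample to have a witness, and then a short case analysis on a vertex of minimum in-degree (which is at most $3$ by counting) finishes the job. You would need either that contraction idea or a genuinely new way to handle vertices of $A$ with all out-edges inside $A$.
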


\begin{proof} A complete digraph on $3$ vertices does not contain $2$ disjoint cycles, implying $f(2)\ge 3$. 

We now prove that any $3$-out digraph contains $2$ disjoint cycles. We proceed by induction on $n$, the number of vertices. For the base case we consider $n=4$ where the only possible digraph is the complete digraph which contains $2$ disjoint cycles of length $2$. We now assume that any $3$-out digraph, with $n-1 \ge 4$ vertices, contains $2$ disjoint cycles. 

Assuming there is a digraph on $n$ vertices failing our assumption, we can remove some of its edges to make it exactly $3$-out, such new digraph still does not contain $2$ disjoint cycles. We call this digraph $G$.

$G$ has no bidirectional edges. If $uw$ is a bidirectional edge then $G-\{u,w\}$ is still $1$-out so it contains a cycle, which paired with the $2$-cycle made by the bidirectional edge $uw$ gives the desired disjoint cycles.

The main idea allowing us to prove the result is using edge contractions. If there is an edge $uv$ such that $u,v$ have no common parent, we can modify $G$ to a new digraph $G'$ by removing $u$ and $v$ and adding a vertex $w$ having $N_{G'}^{+}(w)\equiv N_{G}^{+}(v)$ and $N_{G'}^{-}(w)\equiv N_{G}^{-}(u) \cup N_{G}^{-}(v)$. $G'$ has $n-1$ nodes and is still $3$-out, as $u$,$v$ had no common parent, so by the inductive assumption $G'$ contains $2$ disjoint cycles. If $w$ is not in any of the cycles they were contained in $G$ to start with and we are done. The other option is that $w$ is contained in one of the cycles, this implies the other cycle is in $G$. We distinguish the cases when the cycle in-edge of $w$ comes from $u$'s in-edge or from $v$'s in-edge. Replacing $w$ in the first case by $uv$ and in the second by $v$ we get the other cycle in $G$ and we are done.

The only remaining option is when every edge of $G$ has a \textit{witness} (defined as common parent to both its end-vertices).

Let $v$ be a vertex having the smallest in-degree. As there are $3n$ edges in total we have $d^-(v) \le 3.$

\begin{itemize}
	\item Case $1$: $d^-(v)=0$
	Then $G-\{v\}$ is $3$-out and has $n-1$ vertices so inductively we can find disjoint cycles which are also contained in $G$.

\item Case $2$: $d^-(v)=1$
	The edge ending in $v$ has no witnesses.

\item Case $3$: $d^-(v)=2$
	Let $N^{-}(v)\equiv \{u,w\}$. $u$ has to be the witness to $wv$ and $w$ to $uv$ implying $uw$ is a bidirectional edge which is a contradiction.
	
\item Case $4$: $d^-(v)=3$
	By our choice of $v,$ for all $x\in V(G)$ we have $d^-(x)\ge d^-(v)=3$ and $3n=\sum_{x \in V(G)} d^-(x) \ge nd^-(v)=3n$ so we need to have equality in $d^-(x)\ge d^-(v)$ for all $x$ implying all vertices have in-degree $3$.

	Given a vertex $x$ and its parents $u,v,w$ we show that $u,v,w$ make a $3$-cycle. Indeed, we notice that each of the witnesses of $ux,vx,wx$ must be among $u,v,w$ implying that the $u,v,w$ induced subgraph is $1$-in. This combined with the fact bidirectional edges do not exist, we conclude that $u,v,w$ make a $3$-cycle as claimed.
	
	If we reverse the edges of $G$ we notice that all the above arguments still apply, as $G$ is both exactly $3$-out and exactly $3$-in, so children of $x$ also form a $3$-cycle. As there are no bidirectional edges, the children and parents of $x$ are disjoint and give $2$ disjoint $3$-cycles.
\end{itemize}

\vspace{-0.77 cm}

\end{proof}

\section{The main result}

As noted before considering the complete digraph on $2k-1$ vertices we have $f(k) \ge 2k-1$. 


We start by defining a class of, in some sense, minimal counterexamples to the Bermond-Thomassen conjecture. The main reason for this is to fix a minor flaw in the way the argument of Alon in \cite{Alon96} is written, where he looks at a minimal counterexample to $f(k)\le 64k$ and shows Proposition \ref{critical} for it, but then also uses this proposition for graphs which are minimal counterexamples to different inequalities and as a consequence omits a rather easy case, specifically the second case of Corollary. \ref{cor:recursion}.

We now define digraphs which act as  For positive integers $r,k$ we say a digraph $G$ is \emph{$(k,r)$-critical} if the following properties hold: 
\begin{enumerate}[label=\arabic*)] 
\item $G$ is $r$-out, 
\item $G$ does not contain $k+1$ disjoint cycles,
\item Subject to properties $1)$ and $2)$ $G$ has the smallest number of vertices and
\item Subject to properties $1),2)$ and $3)$ $G$ has the smallest possible number of edges.
\item Any $(r-2)$-out digraph contains $k$ disjoint cycles.
\end{enumerate}
Note that existence of critical graphs for any $r,k$ is not a given, in fact such a graph exists if and only if the Bermond-Thomassen conjecture fails.

As the complete digraph on $r-1$ vertices is $(r-2)$-out and contains at most $(r-1)/2$ disjoint cycles, it follows by Property $5)$ that if a $(k,r)$-critical digraph exists we have $r-1 \ge 2k$.

A $(k,r)$-critical digraph $G$ can not have a bidirectional edge, as else it would contain $k+1$ disjoint cycles, one made by the bidirectional edge and $k$ given by the assumption on $r,k$ for the $(r-2)$-out digraph $G$ with the vertices of the bidirectional edge removed and this would contradict Property $2)$. Similarly a $(k,r)$-critical digraphs can not contain any loops.

\begin{prop}\label{prop:in-nbor}
Let $G(V,E)$ be a $(k,r)$-critical digraph. For any $v\in V$, $G[N^{-}(v)]$ is a $1$-out digraph.
\end{prop}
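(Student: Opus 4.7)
My plan is to argue by contradiction. Suppose some $u\in N^-(v)$ has $N^+(u)\cap N^-(v)=\emptyset$; equivalently, the edge $u\to v$ has no \emph{length-two witness}, i.e.\ no vertex $x\in V\setminus\{u,v\}$ with $u\to x$ and $x\to v$. Note first that property $4$ implies $G$ is exactly $r$-out: otherwise some edge could be removed while preserving property $1$.

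The main tool is to contract the edge $uv$. I define $G'$ on vertex set $V\setminus\{u,v\}\cup\{w\}$, keeping every edge of $G$ between vertices of $V\setminus\{u,v\}$, and letting the new vertex $w$ satisfy
\[
N^+_{G'}(w)=N^+_G(v),\qquad N^-_{G'}(w)=\bigl(N^-_G(u)\cup N^-_G(v)\bigr)\setminus\{u\}.
\]
A short check shows that $w$ has out-degree exactly $r$ in $G'$ (matching $v$'s out-degree in $G$), and that for any $x\in V\setminus\{u,v\}$ the out-degree $d^+_{G'}(x)$ equals $d^+_G(x)=r$ unless $x\in N^-(u)\cap N^-(v)$, in which case the two edges from $x$ to $u$ and $v$ merge into a single edge to $w$ and $d^+_{G'}(x)=r-1$.

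In the main case $N^-(u)\cap N^-(v)=\emptyset$, the digraph $G'$ is $r$-out with $|V|-1$ vertices, so by the vertex-minimality in property $3$ it contains $k+1$ pairwise disjoint cycles. At most one of these, say $C\colon w\to y_1\to\cdots\to y_m\to w$, uses $w$; the remaining $\ge k$ cycles are already cycles of $G$. To lift $C$, observe that $y_1\in N^+_G(v)$ by definition of $N^+_{G'}(w)$, and that $y_m$ lies in $N^-_G(u)\cup N^-_G(v)$: if $y_m\to v$ in $G$, lift $C$ to $v\to y_1\to\cdots\to y_m\to v$; if instead $y_m\to u$ in $G$, lift to $u\to v\to y_1\to\cdots\to y_m\to u$, exploiting the edge $u\to v$. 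Either way $G$ contains $k+1$ disjoint cycles, contradicting property $2$.

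The main obstacle is the remaining case $N^-(u)\cap N^-(v)\ne\emptyset$: then some $z$ in the intersection breaks the $r$-outness of $G'$, and vertex-minimality cannot be applied directly to $G'$. I expect to resolve this by invoking property $5$: the subgraph $G\setminus\{v\}$ is $(r-1)$-out, hence $(r-2)$-out, so it contains $k$ disjoint cycles; I then need to exhibit one further cycle of $G$ using $v$ that is disjoint from those $k$ cycles — most naturally by following the transitive triangle $z\to u\to v$ and closing it via a path from $v$ back to $z$ found inside the vertices left free by the $k$ cycles, again producing $k+1$ disjoint cycles of $G$. Making this ``extra cycle'' construction work while remaining disjoint from the cycles produced by property $5$ is the most delicate step of the proof.
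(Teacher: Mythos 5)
There is a genuine gap, and it stems from working with the wrong notion of ``witness''. The obstruction to contracting the edge $uv$ (into a vertex $w$ with $N^+_{G'}(w)=N^+_G(v)$ and $N^-_{G'}(w)=N^-_G(u)\cup N^-_G(v)$) is a \emph{common parent} of $u$ and $v$, i.e.\ a vertex $x$ with $x\to u$ and $x\to v$ — not your ``length-two witness'' $u\to x\to v$. Your Case A (no common parent) is exactly the paper's contraction argument, correctly executed including the lift of the cycle through $w$; but notice that it nowhere uses your standing hypothesis that $uv$ has no length-two witness. What Case A really proves is that in a critical digraph \emph{every} edge's endpoints share a common parent. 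The paper then finishes in one line: for $u\in N^-(v)$, the common parent $x$ of $u$ and $v$ satisfies $x\in N^-(v)$ and $x\to u$, so every vertex of $G[N^-(v)]$ has in-degree at least $1$; hence $G[N^-(v)]$ contains a cycle, which is the only consequence used later (in Proposition 3.2). (The proposition's wording ``$1$-out'' versus the ``$1$-in'' that this argument yields is immaterial for that application.)

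Your Case B is where the proof breaks. The proposed construction — take $k$ disjoint cycles in $G\setminus\{v\}$ via property $5$ and then close one more cycle through $z\to u\to v$ by a path from $v$ back to $z$ avoiding them — has no reason to succeed: the $k$ cycles may occupy all of $N^+(v)$, or sever every return path to $z$. Worse, the sketch makes no use of $u$, $z$, or the hypothesis on $uv$ at all, so if it worked it would show that \emph{every} critical digraph contains $k+1$ disjoint cycles, i.e.\ that critical digraphs cannot exist — far stronger than anything provable here. The repair is not to patch Case B but to abandon the length-two-witness formulation: run the contraction argument once to conclude that every edge has a common-parent witness, then read off the $1$-in property of $G[N^-(v)]$ as above.
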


\begin{proof}
For every edge, its endpoints need to have a witness, as else we can contract along this edge which would contradict Property $3$ of $G$ being critical.

Given any vertex $u\in N^{-}(v)$, if the edge $uv$ has a witness $w$ we must have $w \in N^{-}(v)$ and $w$ is a parent of $u$ so $G[N^{-}(v)]$ is $1$-in and consequently contains a cycle.  
\end{proof}

\begin{prop}\label{critical}
Let $G(V,E)$ be a $(k,r)$-critical digraph then $k(r^2-r+1) \ge |V|$.
\end{prop}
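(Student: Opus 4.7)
The plan is to argue by contradiction: assume $|V| > k(r^2-r+1)$ and construct $k+1$ pairwise vertex-disjoint cycles, contradicting Property~2. The strategy is to locate a cycle $C$ together with a ``buffer'' $B \supseteq V(C)$ of size at most $r^2-r+1$ such that $G-B$ remains $(r-2)$-out; Property~5 then yields $k$ disjoint cycles in $G-B$, and combining these with $C$ gives the desired $k+1$ disjoint cycles.

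To produce $C$, I first observe that Property~4 forces $G$ to be \emph{exactly} $r$-out: were some vertex of out-degree $>r$, deleting one of its out-edges would preserve Properties 1--3 while strictly reducing the number of edges. Hence $\sum_v d^-(v) = r|V|$, so some vertex $v$ has $d^-(v) \le r$, and Proposition~\ref{prop:in-nbor} applied to $v$ gives a cycle $C \subseteq G[N^-(v)]$ of length at most $d^-(v) \le r$.

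To produce $B$, I would start from $B_0 = V(C)$ and iteratively adjoin any vertex outside $B_i$ with at least $3$ out-neighbors in $B_i$; let $B$ be the stable limit of this process. By construction every $u \notin B$ satisfies $|N^+(u) \cap B| \le 2$, so $G-B$ is $(r-2)$-out. The target bound $r^2-r+1 = 1 + (r-1) + (r-1)^2$ is precisely the Moore bound for digraphs of out-degree $r-1$ and diameter $2$, suggesting that $B$ admits a depth-$2$ BFS-style description centered on $V(C)$, in which each vertex contributes at most $r-1$ new neighbors to the next layer (the ``$-1$'' absorbing one out-edge already committed inside $B$).

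The hard part will be proving $|B| \le r^2-r+1$. This Moore-bound-style accounting must track how each vertex entering $B$ commits its out-degree toward $B$, leveraging Proposition~\ref{prop:in-nbor} to control in-neighborhoods of the vertices of $B$ and the absence of bidirectional edges in a critical digraph to avoid folding the expansion back on itself and overcounting. Once this bound on $|B|$ is established, invoking Property~5 on $G-B$ and combining the resulting $k$ cycles with $C$ produces the forbidden family of $k+1$ disjoint cycles, closing the argument.
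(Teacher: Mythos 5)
There is a genuine gap, and it sits exactly where you flagged it: the bound $|B|\le r^2-r+1$ is the entire content of your proof, and the proposed ``Moore bound'' accounting cannot deliver it. Your closure process adjoins a vertex $u$ when $u$ has at least $3$ out-edges \emph{into} the current set $B_i$; it is therefore driven by the in-edges of $B$, not by out-neighbourhood expansion. But in a $(k,r)$-critical digraph only the out-degrees are controlled (all equal to $r$, as you correctly deduce from Property 4); individual in-degrees can be arbitrarily large, with only the average equal to $r$. So a single vertex of $V(C)$ may have enormously many parents, a constant fraction of which have $3$ out-neighbours on $C$, and the cascade can engulf a number of vertices comparable to $|V|$ in the very first step. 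The Moore bound $1+(r-1)+(r-1)^2$ counts forward (out-)neighbourhood growth to depth $2$ and simply does not bound this backward closure. A further warning sign: your argument never uses the hypothesis $|V|>k(r^2-r+1)$ except to keep $G-B$ non-empty, so if the bound on $|B|$ were provable you would have shown $|V|\le r^2-r+1$ for \emph{every} $(k,r)$-critical digraph, a statement with no dependence on $k$ and much stronger than the proposition --- which should make you suspicious.

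The paper takes a different and cleaner route that avoids deleting anything from $G$. It builds the auxiliary undirected graph $H$ on $V$ joining $u,v$ whenever they share a parent in $G$; then $d_H(v)\le d_G^-(v)(r-1)$, and the Caro--Wei form of Tur\'an's theorem together with Jensen's inequality produces an independent set in $H$ of size at least $|V|/(r^2-r+1)$. Independence in $H$ means pairwise disjoint in-neighbourhoods in $G$, and Proposition \ref{prop:in-nbor} puts a cycle inside each such in-neighbourhood, so $G$ contains $|V|/(r^2-r+1)$ disjoint cycles; Property 2 then forces $|V|\le k(r^2-r+1)$. Note that this argument produces many cycles at once from Proposition \ref{prop:in-nbor} rather than one cycle plus an appeal to Property 5; if you want to salvage your approach, you would need an entirely different mechanism for isolating a cycle inside a small set whose removal costs each outside vertex at most $2$ out-edges, and I do not see one.
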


\begin{proof}
We note that by Property $4$, of $G$ being critical, we can deduce that $d^+(v)=r$ for all $v$. Also from the previous proposition any edge needs to have a witness.

We build an undirected graph $H$ with the set of vertices $V$ and with $u,v \in V$ joined by an edge if and only if they have a common parent in $G$. For $v \in V$ we have $d_H(v) \le d^{-}_G(v)(r-1)$ as any potential witness of an edge ending in $v$ must be in $N^{-}(v)$ and each of these $d^{-}_G(v)$ vertices have $r-1$ children apart from $v$.

Turan's Theorem (Theorem $1$, page 95 of \cite{Alon_Spen92}) states that in any undirected graph $K$ there is an independent set of size at least $\sum_{v \in V(K)}{\frac{1}{d_K(v)+1}}.$ Applying this result to $H$ we obtain that there is a set of independent vertices of $H$ with size at least:
$$\sum_{v \in V}{\frac{1}{d_H(v)+1}} \ge \frac{n^2}{\sum_{v \in V}{d_H(v)}+n} \ge \frac{n^2}{(r-1)\sum_{v \in V}{d^{-}_G(v)}+n}=\frac{n^2}{(r-1)rn+n}=\frac{n}{r^2-r+1},$$
where in the first inequality we used Jensen's inequality. Note that an independent set of $m$ vertices in $H$ corresponds to a set of vertices having disjoint in-neighbourhoods so by Proposition \ref{prop:in-nbor} we can find $m$ disjoint cycles in $G$. As $G$ does not contain $k+1$ disjoint cycles we have $m \le k$ so $k(r^2-r+1) \ge |V|$ as desired.
\end{proof}

The following proposition is our main improvement over the result of Alon \cite{Alon96}. We will use it with $t=3$ for which $f(3)=5$ was proved in \cite{k=3}.

\begin{prop}\label{prop:upper_bound_small_general}
Assuming $f(t)=2t-1$ for some $t,$ let $l=\left\lceil \frac{k+1}{t} \right\rceil$, if there exists a $(k,r)$-critical digraph then:
$$  k(r^2-r+1)\left(\sum_{i=0}^{2t-2} \binom{r}{i}\left(\frac{1}{l}\right)^{i}\left(1-\frac{1}{l}\right)^{r-i}\right)+l\left(1-\frac{1}{l}\right)^{r+1} \ge 1.$$
\end{prop}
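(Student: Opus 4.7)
The plan is to argue by contradiction via a probabilistic construction. Assume for contradiction that the displayed inequality fails. I would partition $V(G)$ randomly into $l$ classes $S_{1},\dots,S_{l}$ by assigning each vertex independently and uniformly to one of the $l$ classes, and then show that, with positive probability, every induced subgraph $G[S_j]$ is simultaneously non-empty and $(2t-1)$-out. Such a partition would, by the hypothesis $f(t)=2t-1$, produce $t$ disjoint cycles inside each $G[S_j]$, yielding $lt\ge k+1$ pairwise disjoint cycles in $G$ and contradicting Property $2$ of $(k,r)$-criticality.

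Call a vertex $v$ \emph{bad} if it has fewer than $2t-1$ out-neighbours in its own class, and call a class \emph{empty} if it contains no vertex; a partition meets the previous paragraph's requirement precisely when it has no bad vertex and no empty class. For a fixed vertex $v$ each of its $r$ out-neighbours lies in $v$'s class independently with probability $1/l$, so the number of out-neighbours of $v$ sharing $v$'s class is distributed as $\mathrm{Bin}(r,1/l)$ and
\[
\Pr(v\text{ is bad})=\sum_{i=0}^{2t-2}\binom{r}{i}\left(\tfrac{1}{l}\right)^{i}\left(1-\tfrac{1}{l}\right)^{r-i},
\]
while the probability that a given class is empty is $(1-1/l)^{|V|}$. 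By linearity of expectation and a union bound, the probability that the random partition fails to be of the desired form is at most
\[
|V|\sum_{i=0}^{2t-2}\binom{r}{i}\left(\tfrac{1}{l}\right)^{i}\left(1-\tfrac{1}{l}\right)^{r-i}+l\left(1-\tfrac{1}{l}\right)^{|V|}.
\]

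To turn this into the stated bound I would invoke Proposition~\ref{critical} to replace $|V|$ by $k(r^{2}-r+1)$ in the first summand, which weakens the estimate in the correct direction, and use $|V|\ge r+1$ (which holds because $G$ is $r$-out and loopless, so any vertex together with its $r$ distinct out-neighbours supplies $r+1$ vertices) to replace $|V|$ by $r+1$ in the exponent of the second summand, again weakening the bound the right way since $1-1/l<1$. The resulting upper bound is precisely the left-hand side of the claim, so if that quantity were strictly less than $1$ a partition of the desired form would exist and Property $2$ would be violated. The argument is a textbook application of the probabilistic method; the only real subtlety is bookkeeping, namely verifying that both substitutions of $|V|$ move the estimate in the correct direction and that a non-empty $(2t-1)$-out induced subgraph really does satisfy the hypothesis of $f(t)=2t-1$ relative to itself (which is immediate from the definitions).
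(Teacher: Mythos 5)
Your proposal is correct and is essentially the paper's own argument: the same random $l$-colouring, the same events (fewer than $2t-1$ same-coloured out-neighbours; empty colour class), the same union bound, and the same substitutions $|V|\le k(r^2-r+1)$ and $|V|\ge r+1$. The only difference is that you explicitly justify $|V|\ge r+1$, which the paper leaves implicit.
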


\begin{proof}
Let us assume the contrary, so $$  k(r^2-r+1)\left(\sum_{i=0}^{2t-2} \binom{r}{i}\left(\frac{1}{l}\right)^{i}\left(1-\frac{1}{l}\right)^{r-i}\right)+l\left(1-\frac{1}{l}\right)^{r+1} < 1.$$
Let $G$ be a $(k,r)$ critical digraph. We colour the vertices of $G$ with $l$ colours, chosen uniformly at random and independently between vertices. Let $A_v$ be the event that less than $2t-1$ children of $v$ are of the same colour as $v$. Let $B_i$ be the event that there are no vertices of colour $i \le l$. We have $\mathbb{P}(A_v)= \sum_{i=0}^{2t-2} \binom{r}{i}\left(\frac{1}{l}\right)^{i}\left(1-\frac{1}{l}\right)^{r-i}$ where term corresponding to $i$ in the sum represents the probability of $v$ having exactly $i$ children of the same colour as it. For each colour $i$ we have $\mathbb{P}(B_i)=\left(1-\frac{1}{l}\right)^n \le \left(1-\frac{1}{l}\right)^{r+1}$. Now using the union bound and $|V|\le k(r^2-r+1)$ given by Proposition \ref{critical} we obtain:
$$ \mathbb{P}\left(\bigcup_{v \in V} A_v \cup \bigcup_{i=1}^{l} B_i\right) \le \sum_{v \in V} \mathbb{P}(A_v)+ \sum_{i=1}^{l} \mathbb{P}(B_i)=$$
$$  |V|\left(\sum_{i=0}^{2t-2} \binom{r}{i}\left(\frac{1}{l}\right)^{i}\left(1-\frac{1}{l}\right)^{r-i}\right)+l\left(1-\frac{1}{l}\right)^{r+1} < $$
$$k(r^2-r+1)\left(\sum_{i=0}^{2t-2} \binom{r}{i}\left(\frac{1}{l}\right)^{i}\left(1-\frac{1}{l}\right)^{r-i}\right)+l\left(1-\frac{1}{l}\right)^{r+1} < 1.$$

So there is a colouring in which vertices of each colour define a non-empty $2t-1$-out digraph which by the assumption of $f(t)=2t-1$ means we find $t$ disjoint cycles in each colour. As there are $l$ colours we get $tl \ge k+1$ disjoint cycles, a contradiction to $G$ being $(k,r)$-critical.
\end{proof}

The following proposition is an approximate version of Proposition \ref{prop:upper_bound_small_general} in the case $t=3.$

\begin{prop} \label{prop:upper_bound_small}
Let $l=\left\lceil \frac{k+1}{3} \right\rceil$, if there exists a $(k,r)$-critical digraph then:
$$ e^{\frac{r-4}{l}} \le \left( \frac{r^6}{8l^3}+\frac{3r^5}{l^2}\right)  .$$
\end{prop}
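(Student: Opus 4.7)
The plan is to start from Proposition~\ref{prop:upper_bound_small_general} applied at $t=3$ (justified since P\'or and Sereni proved $f(3)=5$) and replace the exact binomial sum by elementary analytic upper bounds. That proposition provides
\[
k(r^2-r+1)\sum_{i=0}^{4}\binom{r}{i}\!\left(\tfrac{1}{l}\right)^{i}\!\left(1-\tfrac{1}{l}\right)^{r-i} + l\!\left(1-\tfrac{1}{l}\right)^{r+1}\ge 1,
\]
and the goal is to upper-bound the left-hand side by $e^{-(r-4)/l}\bigl(\tfrac{r^6}{8l^3}+\tfrac{3r^5}{l^2}\bigr)$, so that the desired inequality follows upon multiplying through by $e^{(r-4)/l}$.

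The first step is routine estimation: since $l=\lceil(k+1)/3\rceil$ we have $k\le 3l-1<3l$; we then use $r^2-r+1\le r^2$, $\binom{r}{i}\le r^i/i!$, and $(1-1/l)^{r-i}\le e^{-(r-i)/l}\le e^{-(r-4)/l}$ valid for $0\le i\le 4$. Pulling out the common factor $e^{-(r-4)/l}$, the contributions from $i=4,3,2,1,0$ evaluate to $\tfrac{r^6}{8l^3},\tfrac{r^5}{2l^2},\tfrac{3r^4}{2l},3r^3,3lr^2$ respectively, while $l(1-1/l)^{r+1}\le l\cdot e^{-(r-4)/l}$ contributes an extra $l$.

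The second step is to absorb the five smaller summands into $\tfrac{3r^5}{l^2}$. Here the key observation is that existence of a $(k,r)$-critical digraph forces $r\ge 2k+1$ (from the complete digraph on $r-1$ vertices and Property $5$), and combined with $l\le (k+3)/3$ this gives $r/l\ge 3(2k+1)/(k+3)\ge 3$ whenever $k\ge 2$. For $k=1$ the conclusion is vacuous since Lemma~\ref{lem:one_one} ($f(2)=3$) already rules out $(1,r)$-critical digraphs. Under $r/l\ge 3$, each of the five lower-order summands is bounded by $\tfrac{r^5}{2l^2}$ (for example $\tfrac{3r^4}{2l}\le\tfrac{r^5}{2l^2}$ reduces to $3l\le r$, $3r^3\le\tfrac{r^5}{2l^2}$ to $6l^2\le r^2$, and so on), so their sum is at most $\tfrac{5r^5}{2l^2}\le\tfrac{3r^5}{l^2}$. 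Substituting back into the inequality from Proposition~\ref{prop:upper_bound_small_general} yields the claim.

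I expect the only subtlety to be verifying the lower bound $r/l\ge 3$ cleanly in every non-vacuous case; this is essentially bookkeeping but requires chasing the definition of $l$ against $r\ge 2k+1$ and separately handling the small-$k$ cases using Lemma~\ref{lem:one_one}. Everything else is arithmetic.
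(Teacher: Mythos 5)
Your proof is correct and takes essentially the same route as the paper: both apply Proposition~\ref{prop:upper_bound_small_general} with $t=3$ and reduce the binomial sum to the two dominant terms via elementary estimates ($k<3l$, $\binom{r}{i}\le r^i/i!$, $1-x\le e^{-x}$, and $r\ge 2k+1$ to control the lower-order contributions). The only differences are in bookkeeping — the paper keeps $(l-1)$-denominators and absorbs the small terms into the $i=3$ term using $\sum_{i\le 3}4^i/i!<e^4<64$, whereas you compare each against $r^5/(2l^2)$ using $r\ge 3l$ — and your explicit handling of the vacuous $k=1$ case is a nice touch the paper leaves implicit.
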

\begin{proof}
Proposition \ref{prop:upper_bound_small_general} applies for $t=3$ as $f(3)=5$ (proved in \cite{k=3}) giving:

\begin{align*}
1 & \le k(r^2-r+1)\left(\sum_{i=0}^{4} \binom{r}{i}\left(\frac{1}{l}\right)^{i}\left(1-\frac{1}{l}\right)^{r-i}\right)+l\left(1-\frac{1}{l}\right)^{r+1}  \\
& \le 3l(r^2-r+1)\left(\sum_{i=0}^{4}  \frac{r^i}{i!(l-1)^i}\left(1-\frac{1}{l}\right)^{r}\right)+l\left(1-\frac{1}{l}\right)^{r+1}  \\
& \le 3lr^2\left(\sum_{i=0}^{4} \frac{r^i}{(l-1)^ii!}\left(1-\frac{1}{l}\right)^{r}\right)   \\
& = 3lr^2\left(\sum_{i=0}^{3} \frac{1}{i!}\left(\frac{r}{l-1}\right)^i+\frac{r^4}{24(l-1)^4}\right)\left(1-\frac{1}{l}\right)^{r}   \\
& \le 3lr^2\left(\sum_{i=0}^{3} \frac{1}{i!}\frac{1}{4^{3-i}}\left(\frac{r}{l-1}\right)^3+\frac{r^4}{24(l-1)^4}\right)\left(1-\frac{1}{l}\right)^{r}   \\
& = 3lr^2\left(\sum_{i=0}^{3} \frac{4^i}{i!}\left(\frac{r}{4(l-1)}\right)^3+\frac{r^4}{24(l-1)^4}\right)\left(1-\frac{1}{l}\right)^{r}   \\  
& \le 3lr^2\left(\frac{e^4}{64}\left(\frac{r}{l-1}\right)^3+\frac{r^4}{24(l-1)^4}\right)\left(1-\frac{1}{l}\right)^{r}  \\  
& \le \left(\frac{3r^5}{(l-1)^2}+\frac{r^6}{8(l-1)^3}\right)\left(1-\frac{1}{l}\right)^{r-1}  \\
& \le \left(\frac{3r^5}{l^2}+\frac{r^6}{8l^3}\right)\left(1-\frac{1}{l}\right)^{r-4}  \\
& \le \left( \frac{r^6}{8l^3}+\frac{3r^5}{l^2}\right) e^{-\frac{r-4}{l}}.
\end{align*}
where in the 3rd inequality we used $r \ge 2$ and in the 4th inequality we used $1<r/(4(l-1))$ which follows as $r\ge 2k+1 > 4(l-1).$
\end{proof}

\begin{cor}
$f(k)$ is finite for all values of $k$.
\end{cor}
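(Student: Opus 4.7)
The plan is to induct on $k$ using Proposition \ref{prop:upper_bound_small} as the engine. The base case $f(1) = 1$ is trivial: in any $1$-out digraph, starting from any vertex and repeatedly following out-edges eventually revisits a vertex and so produces a cycle. For the inductive step I would assume $f(k)$ is finite and deduce that $f(k+1)$ is finite.

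Fix $k$, set $l = \lceil (k+1)/3 \rceil$, and let $r_0 = f(k) + 2$, both finite by hypothesis. For any $r \ge r_0$, suppose for contradiction that there exists an $r$-out digraph containing no $k+1$ disjoint cycles, and choose one such digraph $G$ that is minimal first in vertex count and then in edge count. Then $G$ satisfies properties 1)–4) of being $(k,r)$-critical; property 5) holds automatically since $r - 2 \ge f(k)$. Hence $G$ is $(k,r)$-critical and Proposition \ref{prop:upper_bound_small} yields
$$ e^{\frac{r-4}{l}} \le \frac{r^6}{8l^3} + \frac{3r^5}{l^2}. $$

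Since $l$ depends only on $k$, the left-hand side grows exponentially in $r$ while the right-hand side grows only polynomially, so the inequality fails for all sufficiently large $r$. Choosing any $r$ exceeding both $r_0$ and this threshold yields the desired contradiction, so $f(k+1) \le r < \infty$. There is no real obstacle here: the heavy lifting is done in Proposition \ref{prop:upper_bound_small}, and what remains is the bookkeeping of inheriting property 5) from the inductive hypothesis together with the elementary observation that an exponential in $r$ eventually dominates a polynomial in $r$.
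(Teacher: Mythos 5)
Your proposal is correct and follows essentially the same route as the paper: build a $(k,r)$-critical digraph for every $r \ge f(k)+2$ by taking a minimal counterexample, then note that Proposition \ref{prop:upper_bound_small} forces an exponential in $r$ below a polynomial in $r$ (with $l$ fixed), which fails for large $r$. The only cosmetic difference is that you phrase the argument as an induction on $k$ with base case $f(1)=1$, whereas the paper takes the smallest $k$ with $f(k+1)$ infinite; these are the same argument.
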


\begin{proof}
Let us assume the opposite and let $k$ be the smallest integer such that $f(k+1)$ is not finite. Hence for any $r$ this implies that there is an $r$-out digraph not containing $k+1$ cycles, taking one with minimal number of vertices and among such graphs minimal number of edges gives a graph satisfying properties $1)-4)$ of $(k,r)$-criticallity. Assuming $r\ge f(k)+2,$ gives that any graph on $r-2$ vertices contains $k$ disjoint cycles giving property $5)$. This shows that for any $r\ge f(k)+2,$ there is an $(k,r)$-critical digraph. This contradicts Proposition \ref{prop:upper_bound_small} as there the $LHS$ is exponential in $r$ while the $RHS$ is a polynomial. \end{proof}

A more careful bounding in the above derivation would in fact give us $f(k+1) \le (1+o(1))k\log_{e}(k)$.  

The following proposition is slightly technical as we try to stay very close to equality in various inequalities we use, which makes the calculations slightly awkward. 

\begin{prop}\label{prop:small-cases}
For $k < 2^{13}$ we have $f(k) \le 15k+30$ and for $k\le 10$ we have $f(k)\le 12k$.
\end{prop}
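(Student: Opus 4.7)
The plan is to prove both bounds by induction on $k$, invoking Proposition~\ref{prop:upper_bound_small} at each step to rule out the relevant critical digraph.

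For $f(k)\le 12k$ when $k\le 10$, the base cases $k=1,2,3$ follow from $f(1)=1$, $f(2)=3$ (Lemma~\ref{lem:one_one}), and $f(3)=5$. For the inductive step $k\in\{4,\ldots,10\}$, I assume $f(k-1)\le 12(k-1)$ and suppose for contradiction that $f(k)>12k$. Among all $(12k)$-out digraphs with no $k$ disjoint cycles I pick one with fewest vertices and, subject to that, fewest edges; properties $1)$--$4)$ of $(k-1,12k)$-criticality are then immediate, and property $5)$ follows from the inductive hypothesis since $12k-2\ge 12(k-1)\ge f(k-1)$. Proposition~\ref{prop:upper_bound_small} with $r=12k$ and $l=\lceil k/3\rceil$ then produces an inequality that I refute by direct substitution for each of the seven values $k=4,\ldots,10$.

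For $f(k)\le 15k+30$ when $k<2^{13}$, the bound is weaker than $12k$ for $k\le 10$, so those cases follow from the first part. For $11\le k<2^{13}$ I induct analogously: assuming $f(k-1)\le 15k+15$ and supposing $f(k)>15k+30$, a minimal counterexample is $(k-1,r)$-critical with $r=15k+30$, since $r-2=15k+28\ge f(k-1)$. Proposition~\ref{prop:upper_bound_small} then yields
\[
e^{(15k+26)/\lceil k/3\rceil}\le \frac{(15k+30)^6}{8\lceil k/3\rceil^3}+\frac{3(15k+30)^5}{\lceil k/3\rceil^2},
\]
and I need to show this fails for $11\le k\le 2^{13}-1$. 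Using $\lceil k/3\rceil\le(k+2)/3$ forces the exponent on the left to be at least $45-12/(k+2)$, while $\lceil k/3\rceil\ge k/3$ bounds the right-hand side by an explicit cubic polynomial in $k$.

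The main obstacle is the tightness of this inequality near the cutoff: at $k=8191$ the left side is approximately $e^{45}\approx 3.5\times 10^{19}$ against a right side of approximately $3.3\times 10^{19}$, a slack of only a few percent. I would carry out the verification by reducing everything to a single polynomial inequality in $k$, showing that the ratio of the two sides is monotone in $k$ so that it suffices to check the extremal case $k=2^{13}-1$, and then doing that check by direct numerical computation. The choice of $2^{13}$ in the statement is presumably the largest power of two for which this margin remains positive.
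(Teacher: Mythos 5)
Your proposal is correct and follows essentially the same route as the paper: take a minimal counterexample (equivalently, induct on $k$), extract a critical digraph whose Property $5)$ is supplied by the inductive hypothesis, and refute the inequality of Proposition~\ref{prop:upper_bound_small} numerically, with the tight case sitting at $k$ near $2^{13}$. The only difference is bookkeeping in the final verification — the paper substitutes $x=r/l$ and uses monotonicity in $x$ and $l$ separately to reduce to a single evaluation at $x=45$ (resp.\ $x=24$ for the $12k$ bound), whereas you parametrize by $k$ and check the extremal $k$ — which is the same calculation in different clothing.
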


\begin{proof}
Let $k$ be the smallest integer for which $f(k+1)>15(k+1)+30$. Let $r=f(k+1)-1,$ as $f(k)\le 15k+30$ we have that $r-2 \ge f(k)$ so there exists a $(k,r)$ critical digraph and Proposition \ref{prop:upper_bound_small} applies. Denoting $x=r/l$ it implies $l^3e^{\frac{4}{l}} \ge \frac{e^x}{x^6/8+3x^5}$. The right hand side is an increasing function on $x \ge 4$ and the left hand side is increasing in $l$ for $l \ge 2$. If $x \ge 45$ by using $l\le \frac{k}{3}+1 \le 2^{13}/3+1$ we get $21 \cdot 10^{9}  > l^3e^{\frac{4}{l}} \ge \frac{e^{45}}{45^6/8+3\cdot45^5} > 21 \cdot 10^{9},$ so a contradiction. Hence, $f(k+1)-1 < 45l=45 \left\lceil \frac{k+1}{3} \right\rceil  \le 15(k+1)+30,$ also a contradiction completing the proof of the first claim.

For the second claim, the results holds when $k\le 3$ as $f(k)=2k-1<=12k$ in this case, so we assume $k \ge 4.$ We repeat the argument with $12k$ in place of $15k+30$, now $k\le 10$ implies $l < 4,$ so if $x \ge 24$ we obtain $200>l^3e^{\frac{4}{l}} \ge \frac{e^x}{x^6/8+3x^5}>200,$ so a contradiction. Hence, $f(k+1)-1 < 24l=24 \left\lceil \frac{k+1}{3} \right\rceil  \le 8(k+1)+16\le 12k.$ Completing the proof.
\end{proof}

The main idea which enables us to find a linear bound, due to Alon \cite{Alon96}, is based on an extension of the probabilistic ideas introduced in the proof of Proposition \ref{prop:upper_bound_small} and it relies on the following bound. 

\begin{prop}
Assuming $r \ge 2^{15}$ one can split an $(k,r)$-critical digraph $G$ into $2$ disjoint $\frac{r}{2}-\frac{r^{2/3}}{ \sqrt{2}}$-out digraphs.
\end{prop}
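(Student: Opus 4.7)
The plan is to take a uniformly random balanced partition of $V(G)$ and use a concentration argument. Specifically, I would colour each vertex of $G$ independently with one of two colours, each with probability $1/2$, to obtain a partition $V=V_1\sqcup V_2$; I then want to show with positive probability that every vertex has sufficiently many out-neighbours in its own part, which gives the two disjoint subgraphs $G[V_1], G[V_2]$ with the required minimum out-degree.

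For the per-vertex analysis, recall from the start of the proof of Proposition \ref{critical} that every vertex of a $(k,r)$-critical digraph $G$ satisfies $d^+(v)=r$. So, if $v\in V_j$, the number $X_v$ of its children that also lie in $V_j$ is distributed as $\mathrm{Bin}(r,1/2)$. Hoeffding's inequality then gives
$$\mathbb{P}\!\left(X_v<\frac{r}{2}-\frac{r^{2/3}}{\sqrt{2}}\right)\le \exp\!\left(-\frac{2\bigl(r^{2/3}/\sqrt{2}\bigr)^{2}}{r}\right)=e^{-r^{1/3}}.$$
The constant $1/\sqrt{2}$ is precisely chosen so that this exponent comes out to $r^{1/3}$.

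For the union bound, I would use Proposition \ref{critical} together with the inequality $r\ge 2k+1$ (derived from Property~5) to get
$$|V|\le k(r^2-r+1)\le \frac{r-1}{2}(r^2-r+1)<\frac{r^3}{2}.$$
Hence the probability that some vertex is "bad" is at most $\tfrac{r^3}{2}e^{-r^{1/3}}$. The computation to finish is just to check that $\tfrac{r^3}{2}e^{-r^{1/3}}<1$ for $r\ge 2^{15}$, equivalently $3\log_e r+\log_e(1/2)<r^{1/3}$; at $r=2^{15}$ the right-hand side equals $32$ while $3\log_e r=45\log_e 2\approx 31.2$, so the bound holds (with room to spare) and continues to hold for all larger $r$ since $r^{1/3}$ grows much faster than $3\log_e r$. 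A random partition avoiding all the bad events then witnesses the desired splitting.

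The main obstacle is really just how tight the threshold $r\ge 2^{15}$ is: the Hoeffding exponent $r^{1/3}$ and the polynomial $r^3/2$ from Proposition~\ref{critical} cross essentially at $r=2^{15}$, so one has to be careful not to throw away constants when applying Hoeffding and when bounding $|V|$. In particular, using a weaker estimate such as $|V|\le r^3$ or replacing the Hoeffding bound by a Chernoff bound with a slightly worse constant would push the required threshold above $2^{15}$.
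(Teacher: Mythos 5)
Your approach is exactly the paper's: an independent uniform $2$-colouring, the observation that $X_v\sim\mathrm{Bin}(r,1/2)$ (using $d^+(v)=r$ from criticality), the Chernoff/Hoeffding tail $e^{-r^{1/3}}$, and the union bound over at most $k(r^2-r+1)<r^3/2$ vertices, checked at $r=2^{15}$. All of your estimates are correct.

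The one thing you omit, which the paper includes, is the event that one of the two colour classes is empty: your union bound only forbids the events $X_v<t$, but if all vertices happen to receive the same colour you do not obtain a splitting into \emph{two} digraphs (and the downstream use in Corollary \ref{cor:recursion} genuinely needs both parts to be non-empty in order to extract cycles from each). The paper handles this by adding $2\cdot 2^{-|V|}$ to the union bound, which is harmless since $|V|\ge r+1$ makes this term astronomically small; you should add the corresponding two bad events $B_1,B_2$ (colour $i$ unused) to your argument. With that half-line addition your proof is complete and identical in substance to the paper's.
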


\begin{proof}
We colour the vertices of $G$ into $2$ colours uniformly at random. Let $A_v$ be the event that $v$ has less than $t=\frac{r}{2}-\frac{r^{2/3}}{\sqrt{2}}$ children of the same colour. Let $X_v$ be a random variable counting the number of children of $v$ having the same colour as $v$. We have $X_v \sim \text{Bin}(r, 1/2)$ and $A_v$ is equivalent to $X_v<t$. Now using the bound of Chernoff for the tail of the binomial random variable, given in \cite{Alon_Spen92} Appendix A, we have $\mathbb{P}(A_v) \le e^{-r^{1/3}}$.

Now, $r \ge 2k+1$ and Proposition \ref{critical} give us $|V|<\frac{r^3}{2}$, and $G$ being $r$-out implies $|V| \ge r+1,$ these inequalities imply $2\cdot 2^{-|V|}+\frac{r^3}{2}e^{-r^{1/3}}<1,$ using $r \ge 2^{15},$ so there is a colouring of the vertices such that each colour occurs and represents a disjoint $t$-out subgraph.
\end{proof}

\begin{cor}\label{cor:recursion}
Given $f(h)>2^{15}$ we have:
$$f(h)-\sqrt{2}(f(h))^{2/3} \le 2f\left(\lceil h/2\rceil\right) \text{ or}$$
$$f(h) \le f(h-1)+2$$
\end{cor}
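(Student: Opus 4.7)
The proof plan is to let $k = h-1$ and $r = f(h)-1$ and ask whether a $(k,r)$-critical digraph exists. By definition of $f(h)$ there is an $r$-out digraph containing no $h$ vertex-disjoint cycles; taking one with the minimum number of vertices and then the minimum number of edges supplies properties 1)--4) automatically. The only non-trivial requirement is property 5), which demands that every $(r-2)$-out digraph contain $k = h-1$ disjoint cycles, i.e.\ that $f(h-1) \le r-2 = f(h)-3$. If property 5) fails then $f(h) \le f(h-1)+2$, which is precisely the second disjunct of the conclusion, and we are done.

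Otherwise a $(h-1,\,f(h)-1)$-critical digraph $G$ exists, and from $f(h) > 2^{15}$ we have $r \ge 2^{15}$, putting us in the regime of the previous proposition. Applying it to $G$ partitions $V(G)$ into two parts, each inducing a subdigraph of out-degree at least $t = \tfrac{r}{2} - \tfrac{r^{2/3}}{\sqrt{2}}$. Since out-degrees are integers, each part is in fact $\lceil t \rceil$-out. If $\lceil t \rceil \ge f(\lceil h/2\rceil)$ then each part contains $\lceil h/2\rceil$ disjoint cycles, yielding $2\lceil h/2\rceil \ge h$ disjoint cycles in $G$ and contradicting property 2). So $\lceil t \rceil \le f(\lceil h/2\rceil)-1$, which gives $t \le f(\lceil h/2\rceil)-1$ and hence $r - \sqrt{2}\,r^{2/3} \le 2f(\lceil h/2\rceil) - 2$.

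To finish, I substitute $r = f(h)-1$ and use monotonicity of $x \mapsto x^{2/3}$ to absorb the off-by-one. Concretely, $(f(h)-1)^{2/3} \le f(h)^{2/3}$, so $f(h) - \sqrt{2}\,f(h)^{2/3} \le f(h) - \sqrt{2}\,(f(h)-1)^{2/3} = (r - \sqrt{2}\,r^{2/3}) + 1 \le 2f(\lceil h/2\rceil) - 1 \le 2f(\lceil h/2\rceil)$, which is the first disjunct. The only point requiring a little care is the integer-versus-real book-keeping: one has to use that the parts are $\lceil t \rceil$-out (not just $t$-out in the real-valued sense) to get the extra $-2$ on the right, which is exactly what compensates for replacing $(f(h)-1)^{2/3}$ by the larger $f(h)^{2/3}$ on the left. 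All the real content sits in the preceding splitting proposition; this corollary is essentially a packaging step.
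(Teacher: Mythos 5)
Your proposal is correct and follows essentially the same route as the paper: assume the second disjunct fails (equivalently, property 5 holds for $k=h-1$, $r=f(h)-1$), obtain a $(k,r)$-critical digraph, apply the splitting proposition, and absorb the off-by-one via $(f(h)-1)^{2/3}\le f(h)^{2/3}$. The integer/real bookkeeping you flag is handled identically in the paper's own final display.
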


\begin{proof}
Let us assume the second inequality fails, so $f(h)>f(h-1)+2.$ Let $h=k+1$, $r=f(k+1)-1 \ge f(k)+2,$ so that any $r-2$-out digraph contains $k$ disjoint cycles and there is an $r$-out digraph which does not contain $k+1$ disjoint cycles, implying there exists an $(k,r)$-critical graph $G$. By the previous proposition $G$ contains $2$ disjoint $t=\left \lceil r/2-r^{2/3}/ \sqrt{2}\right \rceil$-out subgraphs. We must have $t<f\left(\left\lceil\frac{k+1}{2}\right\rceil\right)$ as else we can find $k+1$ disjoint cycles. Hence,
$$\frac{f(h)-1}{2}-\frac{(f(h))^{2/3}}{\sqrt{2}} \le  \frac{f(h)-1}{2}-\frac{(f(h)-1)^{2/3}}{\sqrt{2}}=\frac{r}{2}-\frac{r^{2/3}}{\sqrt{2}} \le f\left(\lceil h/2\rceil\right)-1.$$
Which implies the desired result.
\end{proof}

\begin{lem}\label{lem:big-cases}
For $h \ge 2^{12}$ we have $f(h) \le 18h-45h^{2/3}$.
\end{lem}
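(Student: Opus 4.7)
I would proceed by strong induction on $h$, splitting the argument at $h = 2^{13}$.

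For the base case $2^{12} \le h < 2^{13}$, Proposition \ref{prop:small-cases} gives $f(h) \le 15h + 30$, and the desired bound $15h + 30 \le 18h - 45h^{2/3}$ rearranges to $45h^{2/3} + 30 \le 3h$, which is routine to check on this range.

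For the inductive step $h \ge 2^{13}$, I would first dispense with the case $f(h) \le 2^{15}$, which is immediate since $18h - 45h^{2/3}$ comfortably exceeds $2^{15}$ throughout this range. Otherwise $f(h) > 2^{15}$ and Corollary \ref{cor:recursion} provides two alternatives. The easy one, $f(h) \le f(h-1)+2$, combines with the inductive hypothesis (applicable since $h-1 \ge 2^{12}$) to reduce matters to verifying $45\bigl(h^{2/3}-(h-1)^{2/3}\bigr) \le 16$, which follows from the mean value theorem applied to $x^{2/3}$ since $(h-1)^{1/3}$ is large.

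The other alternative, $f(h) - \sqrt{2}f(h)^{2/3} \le 2 f(\lceil h/2\rceil)$, is the crux. Set $m = \lceil h/2\rceil \ge 2^{12}$ and let $g(x) = x - \sqrt{2}\,x^{2/3}$, which is increasing on $x \ge 1$. The inductive hypothesis gives $g(f(h)) \le 36m - 90m^{2/3}$; bounding $m \le (h+1)/2$ in the linear part and $m \ge h/2$ in the fractional-power part yields $g(f(h)) \le 18h + 18 - 45 \cdot 2^{1/3}\, h^{2/3}$. Monotonicity of $g$ then reduces the target $f(h) \le 18h - 45h^{2/3}$ to verifying $g(18h - 45h^{2/3}) \ge 18h + 18 - 45\cdot 2^{1/3} h^{2/3}$; after cancelling the linear terms and bounding $(18h - 45h^{2/3})^{2/3} \le 18^{2/3} h^{2/3}$, this reduces to
\[
\bigl(45(2^{1/3}-1) - \sqrt{2}\cdot 18^{2/3}\bigr)\, h^{2/3} \ge 18.
\]
The main obstacle is precisely this final numerical inequality: $45(2^{1/3}-1) \approx 11.70$ must exceed $\sqrt{2}\cdot 18^{2/3} \approx 9.71$ with enough slack to absorb the constant $18$ for every $h \ge 2^{12}$. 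It is this constant comparison that forces the coefficient $45$ in the lemma and ultimately the coefficient $18$ in the main theorem.
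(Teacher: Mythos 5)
Your proposal is correct and follows essentially the same route as the paper: the base case from Proposition \ref{prop:small-cases}, the dichotomy from Corollary \ref{cor:recursion}, monotonicity of $x-\sqrt{2}x^{2/3}$, and the same terminal numerical comparison $45(2^{1/3}-1)$ versus $\sqrt{2}\cdot 18^{2/3}$ (the paper phrases the induction as a minimal counterexample and keeps $(h+1)^{2/3}$ a step longer, but these are cosmetic differences). All the quantitative checks you defer to routine verification do go through.
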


\begin{proof}
Notice that for $2^{12} \le h < 2^{13}$ the result holds as a consequence of Proposition \ref{prop:small-cases} as in this range:
$$f(h) \le 15h+30 \le 18h-45h^{2/3}.$$
Let us assume for the sake of contradiction that the Lemma fails and Let $h$ be the smallest integer for which the Lemma fails, so 
\begin{equation}\label{equ:1}
f(h)>18h-45h^{2/3}
\end{equation}
and for all $t<h$ we have
\begin{equation}\label{equ:2}
f(t) \le 18t-45t^{2/3}.
\end{equation}
By above consideration $h \ge 2^{13},$ so if $f(h) \le 2^{15}$ then $f(h) \le 4h \le 18h-45h^{2/3}.$ Hence, we may and will assume that $f(h) > 2^{15}$. This allows us to apply Corollary \ref{cor:recursion} for $f(h)$ implying that $f(h)-\sqrt{2}f(h)^{2/3} \ge f\left(\lceil h/2 \rceil\right)$ or $f(h) \le f(h-1)+2.$ In the latter case 
\begin{align*}
f(h) & \le f(h-1)+2 \\
& \le 18(h-1)-45(h-1)^{2/3}+2 \\
& = 18h-45h^{2/3}+45(h^{2/3}-(h-1)^{2/3})-16 \\
& < 18h - 45h^{2/3},
\end{align*}
where we used (\ref{equ:2}) for $t=h-1<h$ and $h^{2/3}-(h-1)^{2/3} < 1/3,$ which holds for $h \ge 2^5.$ This gives a contradiction.

If the former case  we have  
\begin{align*}
f(h)-\sqrt{2}f(h)^{2/3} & \le 2f\left(\lceil h/2\rceil\right) \\
 & \le 2\left(18\lceil h/2\rceil-45 \left(\lceil h/2\rceil\right)^{2/3}\right) \\
 & \le 2\left(18(h+1)/2-45 ( (h+1)/2)^{2/3}\right)\\
 & =18h+18-2^{1/3}45(h+1)^{2/3}.
\end{align*}
Where we used (\ref{equ:2}) for $t=\lceil h/2\rceil$, and the fact $g(x)=18x-45x^{2/3}$ is increasing for $x \ge 5.$

Using that $g(x)=x-\sqrt{2}x^{2/3}$ is increasing for $x \ge 2^{3/2} $ and (\ref{equ:1}) we obtain: 
\begin{align*}
f(h)-\sqrt{2}f(h)^{2/3} & \ge 18h-45h^{2/3} - \sqrt{2} (18h-45h^{2/3})^{2/3} \\
& \ge 18h-45h^{2/3}- \sqrt{2} (18h)^{2/3},
\end{align*}
implying in turn $18+\sqrt{2}\cdot(18h)^{2/3} > (2^{1/3}-1)45h^{2/3}.$ Using $h \ge 2^{12}$ we get $1+10 > 18h^{-2/3}+\sqrt{2}\cdot18^{2/3} > (2^{1/3}-1)\cdot 45>11,$ where we used $10>\sqrt{2}\cdot18^{2/3}$ and $2^{1/3}>5/4,$ giving us a contradiction and completing the proof.
\end{proof}
\section{Concluding remarks}
Combining results of Propositions \ref{prop:small-cases} and Lemma \ref{lem:big-cases} we obtain the following result.
\begin{thm}
$$2k-1 \le f(k) \le 18k.$$
\end{thm}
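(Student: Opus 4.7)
The plan is to combine the already-established lower bound $f(k) \ge 2k-1$, coming from the complete digraph on $2k-1$ vertices noted in the introduction, with an upper bound $f(k) \le 18k$ obtained by a case split on the size of $k$. Since Proposition \ref{prop:small-cases} handles the range $k < 2^{13}$ and Lemma \ref{lem:big-cases} handles $h \ge 2^{12}$, these two ranges overlap and together exhaust every positive integer, so no new argument is needed beyond assembling the pieces.

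First I would dispatch the smallest values $k \le 10$, where the second clause of Proposition \ref{prop:small-cases} gives $f(k) \le 12k \le 18k$ immediately. Next, for $10 < k < 2^{13}$, the first clause gives $f(k) \le 15k + 30$; since $k \ge 10$ forces $30 \le 3k$, this yields $f(k) \le 18k$. The alignment of the threshold $k = 10$ in the second clause with the break-even point $15k + 30 \le 18k$ is precisely the reason the $f(k) \le 12k$ statement was included in Proposition \ref{prop:small-cases} in the first place. Finally, for $k \ge 2^{12}$, Lemma \ref{lem:big-cases} gives $f(k) \le 18k - 45k^{2/3} \le 18k$, comfortably so.

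There is no substantive obstacle: all of the genuine work has already been done in the preceding Proposition and Lemma, and the only thing left to check is that the three intervals $k \le 10$, $10 < k < 2^{13}$, and $k \ge 2^{12}$ cover $\mathbb{Z}_{>0}$ and that the arithmetic of each case closes. The only mild subtlety is the boundary verification at $k = 10$ and the overlap in $2^{12} \le k < 2^{13}$, where either of the two upper bound results is available; either choice yields $f(k) \le 18k$.
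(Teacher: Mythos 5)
Your proposal is correct and is essentially the paper's own argument: the paper proves the theorem by simply combining Proposition \ref{prop:small-cases} with Lemma \ref{lem:big-cases} (plus the complete-digraph lower bound), and you have merely written out the routine arithmetic ($12k\le 18k$ for $k\le 10$, $15k+30\le 18k$ for $k\ge 10$, and $18k-45k^{2/3}\le 18k$ for $k\ge 2^{12}$) and the covering of the integer ranges. Nothing further is needed.
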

\vspace{-2mm}
Optimising various constants and inequalities used above and using a computer together with Proposition \ref{prop:upper_bound_small_general} in place of Proposition \ref{prop:small-cases} the constant can be improved to close to $16$ but we believe that without introducing some new ideas, the exact method given above is unlikely to bring the constant much lower than $16$.

The main benefit of Proposition \ref{prop:upper_bound_small_general} is that it allows further improvement of the constant, provided we could prove $f(k)=2k-1$ for more small values of $k$. Unfortunately, this is still an open problem for all $k \ge 4$. We do however note that Proposition \ref{critical} shows that if for a fixed $k$ the conjecture $f(k)=2k-1$ is false then there needs to exist a counterexample having at most $4k^3$ vertices, in particular testing the conjecture for small values of $k$ can be done in a finite time, albeit still prohibitively large, even for $k=4$. 

\vspace{-2mm}

\section*{Acknowledgments}
I am very grateful to Prof. Imre Leader for his support and useful discussions. 

\noindent I was provided financial support by Trinity College, Cambridge for which I am also grateful.

I owe a great debt to the anonymous referees, whose excellent and insightful comments vastly improved presentation of the paper. Specifically, a referee brought to my attention the reference \cite{k=3}, using which allowed for an improvement to the main result. 

\vspace{-2mm}

\end{document}